\def\MT@register@subst@font{\MT@exp@one@n\MT@in@clist\font@name\MT@font@list
   \ifMT@inlist@\else\xdef\MT@font@list{\MT@font@list\font@name,}\fi}
\theoremstyle{plain}
\newtheorem{theorem}{Theorem}
\newtheorem{lemma}{Lemma}
\newtheorem{claim}{Claim}
\theoremstyle{definition}
\newcommand{\setbuilder}[2]{\left\{#1:#2\right\}}
\newcommand{\rank}[1]{\operatorname{rank}(#1)}
\newcommand{\norm}[1]{\left\lVert#1\right\rVert}
\newcommand{\Bignorm}[1]{\Bigl\lVert#1\Bigr\rVert}
\newcommand{\ipr}[2]{\left\langle #1, #2 \right\rangle}
\newcommand{\card}[1]{\left\lvert#1\right\rvert}
\newcommand{\numbersystem}[1]{\mathbb{#1}}
\newcommand{\bR}{\numbersystem{R}}
\newcommand{\define}[1]{%
    \emph{#1}%
}
\title{Bounding the size of an almost-equidistant set in Euclidean space}
\author{Andrey Kupavskii\thanks{Moscow Institute of Physics and Technology, Ecole Polytechnique F\'ed\'erale de Lausanne.\qquad\qquad Email: \texttt{kupavskii@yandex.ru}}
\and
Nabil H. Mustafa\thanks{Universit\'e Paris-Est, Laboratoire d'Informatique Gaspard-Monge, ESIEE Paris, France. \qquad\quad Email: \texttt{mustafan@esiee.fr}}\;\,\thanks{The work of Nabil H. Mustafa
in this paper has been supported by the grant ANR SAGA (JCJC-14-CE25-0016-01).}
\and
Konrad J. Swanepoel\thanks{Department of Mathematics, London School of Economics and Political Science, London. \qquad\quad Email: \texttt{k.swanepoel@lse.ac.uk}}}
\date{}
\begin{document}
\maketitle

\begin{abstract}
A set of points in $d$-dimensional Euclidean space is \define{almost equidistant} if among any three points of the set, some two are at distance $1$.
We show that an almost-equidistant set in $\bR^d$ has cardinality $O(d^{4/3})$.
\end{abstract}

\section{Introduction}
A set of lines through the origin of Euclidean $d$-space $\bR^d$ is \define{almost orthogonal} if among any three of the lines, some two are orthogonal.
Erd\H{o}s asked (see \cite{Rosenfeld}) what is the largest cardinality of an almost-orthogonal set of lines in $\bR^d$?
By taking two sets of $d$ pairwise orthogonal lines, we see that $2d$ is possible.
Rosenfeld \cite{Rosenfeld} showed that $2d$ is the maximum using eigenvalues.
This result was subsequently given simpler proofs by Pudl\'ak \cite{Pudlak} and Deaett \cite{Deaett}.

In this note we consider the following analogous notion, replacing orthogonal pairs of lines by pairs of point at unit distance.
A subset $V$ of Euclidean $d$-space $\bR^d$ is \define{almost equidistant} if among any three points in $V$, some two are at Euclidean distance $1$.
We investigate the largest size, which we denote by $f(d)$, of an almost-equidistant set in $\bR^d$.
Although this is a very natural question, it seems to be much harder than the question of Erd\H{o}s, which can be refomulated as asking for the largest size of an almost-equidistant set on a sphere of radius $1/\sqrt{2}$ in $\bR^d$.
Before stating our main result, we give an overview of what is known about the size of $f(d)$.

Bezdek, Nasz\'odi and Visy \cite{Bezdek-Naszodi-Visy} showed that $f(2)\leq 7$, and
Istv\'an Talata (personal communication, 2007) showed that the only almost-equidistant set in $\bR^2$ with $7$ points is the Moser spindle.
Gy\"orey \cite{Gyorey2004} showed that $f(3)\leq 10$ and that there is a unique almost-equidistant set of $10$ points in $\bR^3$, a configuration originally considered by Nechushtan \cite{Nechushtan}.
The Moser spindle can be generalized to higher dimensions, giving an almost-equidistant set of $2d+3$ points in $\bR^d$ \cite{Bezdek-Langi}.
(We mention that Bezdek and Langi \cite{Bezdek-Langi} considered the variant of Erd\H{o}s's problem where the radius of the sphere is arbitrary instead of $1/\sqrt{2}$.)
A construction of Larman and Rogers \cite{Larman-Rogers} shows that $f(5)\geq 16$.
Since there does not exist a set of $d+2$ points in $\bR^d$ that are pairwise at distance $1$, it follows that $f(d)\leq R(d+2,3)-1$, where the Ramsey number $R(a,b)$ is the smallest $n$ such that whenever each edge of the complete graph on $n$ vertices is coloured blue or red, there is either a blue clique of size $a$ or a red clique of size $b$.
Ajtai, Koml\'{o}s, and Szemer\'{e}di~\cite{ajKomSze80} proved $R(k,3) = O(k^2/\log{k})$, which implies the asymptotic upper bound $f(d) \leq O(d^2/\log{d})$.
Balko et al.\ \cite{BPSSV} generalized the Nechushtan configuration to higher dimensions, giving $f(d)\geq 2d+4$ for all $d\geq 3$.
They also obtained the asymptotic upper bound $f(d)=O(d^{3/2})$ by an argument based on Deaett's paper \cite{Deaett}.
Using computer search and ad hoc geometric arguments, they obtained the following bounds for small $d$: $f(4)\leq 13$, $f(5)\leq 20$, $18\leq f(6)\leq 26$, $20\leq f(7)\leq 34$, and $f(9)\geq f(8)\geq 24$.
Polyanskii \cite{Polyanskii} subsequently improved the asymptotic upper bound to $f(d)=O(d^{13/9})$.

In this note we obtain a further improvement to the upper bound.
\begin{theorem}\label{thm1}
An almost-equidistant set in $\bR^d$ has cardinality $O(d^{4/3})$.
\end{theorem}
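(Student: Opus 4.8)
Let $n=\card{V}$ and let $G$ be the graph on $V$ joining two points at distance $1$; the hypothesis says exactly that the complement $\bar G$ is triangle-free (equivalently $G$ has no independent set of size $3$), and since $\bR^d$ contains no $d+2$ pairwise equidistant points, every clique of $G$ has at most $d+1$ vertices. I would argue by a dichotomy on the clique number $\omega(G)$. Fix a maximum clique $C$ of $G$ and put $k:=\card{C}=\omega(G)$. For $c\in C$ the set $B_c$ of points of $V\setminus C$ not adjacent to $c$ is the $\bar G$-neighbourhood of $c$, hence an independent set of $\bar G$, hence a clique of $G$, so $\card{B_c}\le k$; and by maximality of $C$ every point of $V\setminus C$ lies in some $B_c$. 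Thus $n\le k+k^{2}$, which already gives $n=O(d^{4/3})$ whenever $k=O(d^{2/3})$, so from now on assume $k\ge d^{2/3}$.

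For large $k$ I would first make two reductions. One may assume $\operatorname{diam}(V)\le 3$: if $\norm{x-y}>3$ for some $x,y\in V$, then every other point lies on exactly one of the two disjoint unit spheres about $x$ and $y$, and inside either sphere all pairs are at distance $1$ (otherwise two non-adjacent points of that sphere together with one point of the other sphere form an independent triple), so $V$ is two cliques plus two points and $n=O(d)$. Next, placing $C$ in suitable barycentric coordinates shows that a point $p\in V$ is adjacent to $c_i$ precisely when a fixed coordinate of $p$ equals a value $\tau_p$ depending only on $\norm{p}$; consequently the points $p$ sharing a prescribed set $S$ of non-neighbours in $C$ all lie on the sphere of points at distance $1$ from $\{c_i:c_i\notin S\}$, which spans a flat of dimension $d-k+\card{S}+1$, so there are at most $f(d-k+\card{S}+1)$ of them. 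Hence points with few non-neighbours in $C$ are controlled by $f$ in much smaller dimension, while points with many non-neighbours in $C$ are scarce because $\sum_{p\in V\setminus C}\card{S_p}=\sum_{c\in C}\card{B_c}\le k^{2}$.

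The linear-algebraic ingredient, in the spirit of Deaett's proof of Rosenfeld's theorem, is the matrix $M:=J-D$, where $D=(\norm{v_i-v_j}^2)_{i,j}$ and $J$ is all-ones: here $M_{ii}=1$, $M_{ij}=0$ exactly on the edges of $G$, $\rank{M}\le d+2$, and, since $D$ has at most one positive eigenvalue, $M$ has at most one negative eigenvalue. Writing $M=I+M'$, the off-diagonal matrix $M'$ is supported precisely on the edge set of the triangle-free graph $\bar G$, so $\trace{M'}=0$ and $\trace{(M')^{3}}=0$, while $\rank{M}\le d+2$ forces $-1$ to be an eigenvalue of $M'$ of multiplicity at least $n-d-2$. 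Feeding the two trace identities, the control on the single negative eigenvalue, and the bounded-diameter bound on the entries into elementary inequalities among the at most $d+2$ remaining eigenvalues, one aims for a near-equidistance estimate of the form $\sum_{ij\in E(\bar G)}\bigl(1-\norm{v_i-v_j}^{2}\bigr)^{2}=O(n)$: for every threshold $t$ at most $O(n/t^{2})$ non-adjacent pairs have squared distance outside $(1-t,1+t)$, while a set of fewer than $1/t$ points whose pairwise squared distances all lie in $(1-t,1+t)$ spans only $O(d)$ dimensions (a Weyl perturbation of the equidistant case).

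Finally I would combine the geometric stratification of $V\setminus C$ with this near-equidistance estimate, optimising $t$ and the threshold $d^{2/3}$, to obtain $n=O(d^{4/3})$, which also closes the induction implicit in the appeals to $f$ in lower dimension. I expect the real difficulty to be exactly this synthesis in the regime $k=\Theta(d)$: there $V\setminus C$ is covered by $\Theta(d)$ cliques of size $\Theta(d)$, so the covering bound only yields $n=O(d^{2})$, and one must genuinely exploit the interplay between the low-dimensional spheres carrying the points close to $C$, the near-equidistance of almost all non-edges, and the scarcity of points far from $C$ to gain the factor $d^{2/3}$.
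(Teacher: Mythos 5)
Your opening reduction is exactly the paper's: fix a maximum clique $C$ with $k=\card{C}$, cover $V\setminus C$ by the $\le k$ non-neighbour cliques $B_c$, get $n\le k^2+k$, and assume $k\ge d^{2/3}$. After that, however, the argument is a collection of intentions rather than a proof, and the central intended step is almost certainly unobtainable by the means you describe. Your plan hinges on the ``near-equidistance estimate'' $\sum_{ij\in E(\bar G)}\bigl(1-\norm{v_i-v_j}^2\bigr)^2=O(n)$. Note what this would give: for your matrix $M=J-D$ one has $M_{ii}=1$, $M_{ij}=0$ on edges of $G$, and $\rank{M}\le d+3$, so the standard rank inequality (Lemma~\ref{lemma0}) would yield $d+3\ge n^2/\bigl(n+O(n)\bigr)=\Omega(n)$, i.e.\ $n=O(d)$ --- a linear bound, far stronger than the theorem and not known. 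That is a strong signal that the estimate cannot follow from the trace identities $\trace{M'}=\trace{(M')^3}=0$, the single negative eigenvalue, and the multiplicity of $-1$; indeed nothing in your sketch controls the \emph{magnitude} of the off-diagonal entries $1-\norm{v_i-v_j}^2$ on non-edges beyond the diameter bound, and the clause about ``fewer than $1/t$ points spanning $O(d)$ dimensions'' does not connect to anything. You acknowledge the synthesis in the regime $k=\Theta(d)$ is missing; that regime is the entire content of the theorem.

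The paper's route through this regime is quite different in its quantitative structure, and you are missing both of its key devices. First, it does not try to control all of $V$ at once: it isolates the set $N$ of points with at least $k-k^{4/3}d^{-2/3}$ neighbours in $C$, and bounds $\card{V\setminus N}<k^{2/3}d^{2/3}$ by the same double count $\sum_c\card{B_c}\le k^2$ that you wrote down but did not exploit with a threshold. Second, for $v_i\in N$ it centres coordinates at the centroid of $C$ and uses Lemmas~\ref{lemma1} and~\ref{lemma2} to show $\norm{v_i}^2=\tfrac12+O(k^{-1/3}d^{-1/3})$ and $\ipr{v_i}{v_j}=O(k^{-1/3}d^{-1/3})$ on edges; then, for the non-edges at $v_i$, it uses that the non-neighbours form a unit simplex, picks the point $p$ that makes the vectors $v_j-p$ orthogonal of norm $1/\sqrt2$, and applies Bessel's inequality to get only $\sum_{v_jv_i\notin E}\ipr{v_i}{v_j}^2=O(k^{2/3}d^{-1/3})$ \emph{per vertex} --- much weaker than $O(1)$ per vertex, but enough, after feeding into Lemma~\ref{lemma0}, to give $\card{N}=O(k^{2/3}d^{2/3})$ and hence $O(d^{4/3})$. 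Your geometric stratification by the non-neighbour sets $S_p$ and the appeal to $f$ in dimension $d-k+\card{S}+1$ is not developed into a usable recursion and is not needed. As it stands, the proposal proves the theorem only when $k=O(d^{2/3})$ and leaves the main case open.
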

Its proof is based on the approach of Balko et al.\ \cite{BPSSV}.
Before giving the proof of Theorem~\ref{thm1} in Section~\ref{section:proof},
we collect some lemmas in the next section.

\section{Preliminaries}
We call a finite subset $C$ of $\bR^d$ (the vertex set of) a \define{unit simplex} if the distance between any two points in $C$ equals $1$.
It is well known that if $C$ is a unit simplex then $\card{C}\leq d+1$.
Given any finite $V\subset\bR^d$, we define the \define{unit-distance graph $G=(V,E)$ on $V$} to be the graph with $vw\in E$ iff $\norm{v-w}=1$.
Thus, $C\subset V$ is a unit simplex iff it is a clique in $G$.
We denote the set of neighbours of $v\in V$ in $G$ by $N(v)$.

The following well-known lemma gives a lower bound for the rank of a square matrix in terms of its entries~\cite{Alon, Deaett, Pudlak}.
\begin{lemma}\label{lemma0}
For any non-zero $n\times n$ symmetric matrix $A=[a_{i,j}]$,
\[\rank A\geq \frac{(\sum_i a_{i,i})^2}{\sum_{i,j} a_{i,j}^2}.\]
\end{lemma}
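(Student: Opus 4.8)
The plan is to use the spectral decomposition of the real symmetric matrix $A$ together with the Cauchy--Schwarz inequality. First I would invoke the spectral theorem to list the eigenvalues of $A$ as real numbers $\lambda_1,\dots,\lambda_n$, of which exactly $r:=\rank A$ are nonzero when counted with multiplicity. Since $A$ is nonzero we have $r\geq 1$ and the denominator $\sum_{i,j}a_{i,j}^2$ is strictly positive, so the right-hand side is well defined; and if $\sum_i a_{i,i}=0$ the inequality is trivial, so one may as well assume the numerator is nonzero too.

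The next step is to rewrite both sides in terms of the eigenvalues. The numerator is $(\sum_i a_{i,i})^2=(\trace{A})^2=\bigl(\sum_i\lambda_i\bigr)^2$. For the denominator, I would use symmetry to compute $(A^2)_{i,i}=\sum_j a_{i,j}a_{j,i}=\sum_j a_{i,j}^2$, whence $\sum_{i,j}a_{i,j}^2=\trace{A^2}=\sum_i\lambda_i^2$. Thus the claimed inequality is equivalent to $(\trace{A})^2\leq r\cdot\trace{A^2}$.

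To finish, I would apply the Cauchy--Schwarz inequality to the at most $r$ nonzero eigenvalues: $\bigl(\sum_{i:\lambda_i\neq 0}\lambda_i\cdot 1\bigr)^2\leq\bigl(\sum_{i:\lambda_i\neq 0}\lambda_i^2\bigr)\cdot\bigl(\sum_{i:\lambda_i\neq 0}1\bigr)\leq \trace{A^2}\cdot r$. Rearranging yields $r\geq(\trace{A})^2/\trace{A^2}=\bigl(\sum_i a_{i,i}\bigr)^2/\sum_{i,j}a_{i,j}^2$, as required.

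I do not expect a genuine obstacle here; the proof is short. The only points needing a little care are that symmetry (and real entries) is what guarantees a real spectrum and the identity $(A^2)_{i,i}=\sum_j a_{i,j}^2$, and that Cauchy--Schwarz must be applied over the index set of nonzero eigenvalues, which has size $r$ rather than $n$ --- this is precisely what turns the bound into a statement about the rank.
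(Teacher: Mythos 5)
Your proof is correct. The paper itself states this lemma without proof, citing it as well known from \cite{Alon, Deaett, Pudlak}, and your argument --- diagonalize, note $\sum_{i,j}a_{i,j}^2=\trace{A^2}=\sum_i\lambda_i^2$ and $\sum_i a_{i,i}=\sum_i\lambda_i$, then apply Cauchy--Schwarz over the $\rank{A}$ nonzero eigenvalues --- is exactly the standard proof given in those references, so there is nothing to add.
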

\begin{lemma}\label{lemma1}
Let $C$ be a unit simplex with centroid $c=\frac{1}{\card{C}}\sum_{v\in C}v$.
Then \[\norm{v-c}^2=\frac12\left(1-\frac1{\card{C}}\right) \quad\text{for all $v\in C$,}\] and \[\ipr{v-c}{v'-c}=-\frac{1}{2\card{C}}\quad\text{for all distinct $v,v'\in C$.}\]
\end{lemma}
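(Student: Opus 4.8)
The plan is to express both identities in terms of the pairwise squared distances of the simplex, all of which are known: $\norm{v-v'}^2=1$ for distinct $v,v'\in C$, and of course $\norm{v-v}^2=0$. Write $n=\card{C}$. The key starting point is that, since $c$ is the centroid,
\[v-c=\frac1n\sum_{w\in C}(v-w)\qquad\text{for every }v\in C,\]
so that for any $v,v'\in C$ (not necessarily distinct)
\[\ipr{v-c}{v'-c}=\frac1{n^2}\sum_{w\in C}\sum_{w'\in C}\ipr{v-w}{v'-w'}.\]

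Next I would apply the polarization identity
\[\ipr{v-w}{v'-w'}=\frac12\left(\norm{v-w'}^2+\norm{w-v'}^2-\norm{v-v'}^2-\norm{w-w'}^2\right),\]
valid for any four points $v,w,v',w'$, to turn the inner double sum into a combination of four double sums of squared distances. Each of these is immediate because a squared distance between two points of $C$ equals $1$ if the points differ and $0$ otherwise; for instance $\sum_{w,w'}\norm{v-w'}^2=n(n-1)$ and $\sum_{w,w'}\norm{w-w'}^2=n(n-1)$.

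In the diagonal case $v=v'$ this gives $\norm{v-c}^2=\frac1{n^2}\cdot\frac{n(n-1)}2=\frac12\bigl(1-\frac1n\bigr)$, and for distinct $v,v'$ (so $\norm{v-v'}^2=1$, contributing $\sum_{w,w'}\norm{v-v'}^2=n^2$) the bracketed terms sum to $n(n-1)+n(n-1)-n^2-n(n-1)=-n$, whence $\ipr{v-c}{v'-c}=\frac1{n^2}\cdot\bigl(-\frac n2\bigr)=-\frac1{2n}$, as claimed. I do not expect a genuine obstacle here; the only point requiring care is the bookkeeping of which ordered pairs $(w,w')$ contribute $0$ versus $1$ to each of the four sums. (Alternatively, once $\norm{v-c}^2$ is in hand the second identity follows with no further computation: from $\sum_{v'\in C}(v'-c)=0$ one gets $\norm{v-c}^2+\sum_{v'\neq v}\ipr{v-c}{v'-c}=0$, and since every permutation of $C$ extends to an isometry fixing $c$, all the off-diagonal inner products are equal, hence each equals $-\norm{v-c}^2/(n-1)=-\frac1{2n}$.)
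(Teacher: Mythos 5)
Your proof is correct, and it takes a genuinely different route from the paper's. The paper translates the centroid to the origin and invokes the symmetry of the regular simplex to assert that $\alpha=\norm{p_i}^2$ and $\beta=\ipr{p_i}{p_j}$ ($i\neq j$) do not depend on the indices; the identities $0=\norm{\sum_i p_i}^2=k\alpha+k(k-1)\beta$ and $1=\norm{p_i-p_j}^2=2\alpha-2\beta$ then form a $2\times 2$ linear system whose solution gives both claims simultaneously. You instead write $v-c=\frac1n\sum_{w}(v-w)$ and use the polarization identity to express each $\ipr{v-c}{v'-c}$ directly as a signed count of ordered pairs at unit distance; your bookkeeping checks out (the four double sums are $n(n-1)$, $n(n-1)$, $0$ in the diagonal case or $n^2$ in the off-diagonal case, and $n(n-1)$), yielding exactly the stated values. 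What your main argument buys is that it is symmetry-free: it is the standard conversion from the squared-distance matrix of a point set to the Gram matrix of its centered positions, so it would work verbatim for a simplex with arbitrary prescribed edge lengths, whereas the paper's argument is shorter but rests on the (true, though not spelled out) fact that every permutation of the vertices of a regular simplex is realized by an isometry fixing the centroid. Your parenthetical alternative for the second identity --- combining $\sum_{v'\in C}(v'-c)=0$ with that same symmetry --- is essentially the paper's argument in disguise, and correspondingly inherits the same implicit appeal to symmetry.
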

\begin{proof}
We may translate $C$ so that $c$ is the origin $o$.
Write $C=\{p_1,\dots,p_k\}$.
By symmetry, $\alpha:=\norm{p_i}^2$ is independent of $i$, and $\beta:=\ipr{v_i}{v_j}$ ($i\neq j$) is independent of $i$ and $j$.
Then
\[ 0 = \norm{\sum_{i=1}^k p_i}^2 = k\alpha +k(k-1)\beta\]
and \[ 1 = \norm{p_i-p_j}^2=2\alpha-2\beta.\]
Solving these two linear equations in $\alpha$ and $\beta$, we obtain $\alpha=\frac12-\frac{1}{2k}$ and $\beta=-\frac{1}{2k}$.
\end{proof}
\begin{lemma}\label{lemma2}
Let $C$ be a unit simplex with centroid $c=\frac{1}{\card{C}}\sum_{v\in C}v$, and let $F\subset C$ with centroid $f=\frac{1}{\card{F}}\sum_{v\in F}v$.
Then \[\norm{c-f}^2 = \frac12\left(\frac{1}{\card{F}}-\frac{1}{\card{C}}\right).\]
\end{lemma}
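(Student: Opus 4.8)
The plan is to translate the whole configuration so that the centroid $c$ of $C$ coincides with the origin $o$; then $\norm{c-f}^2=\norm{f}^2$, so it suffices to compute $\norm{f}^2$. Write $k=\card{C}$ and $m=\card{F}$. Since $f=\frac1m\sum_{v\in F}v$, expanding the squared norm of the sum gives
\[
\norm{f}^2=\frac1{m^2}\Bignorm{\sum_{v\in F}v}^2=\frac1{m^2}\left(\sum_{v\in F}\norm{v}^2+\sum_{\substack{v,v'\in F\\ v\neq v'}}\ipr{v}{v'}\right).
\]

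The key step is to feed in Lemma~\ref{lemma1}: because $c=o$, we have $\norm{v}^2=\frac12\bigl(1-\frac1k\bigr)$ for every $v\in F\subseteq C$ and $\ipr{v}{v'}=-\frac1{2k}$ for distinct $v,v'\in F$. There are $m$ diagonal terms and $m(m-1)$ ordered off-diagonal pairs, so
\[
\norm{f}^2=\frac1{m^2}\left(\frac m2\Bigl(1-\frac1k\Bigr)-\frac{m(m-1)}{2k}\right)=\frac1{2m}\left(1-\frac1k-\frac{m-1}{k}\right)=\frac1{2m}\left(1-\frac mk\right)=\frac12\left(\frac1m-\frac1k\right),
\]
which is exactly the claimed identity $\norm{c-f}^2=\frac12\bigl(\frac{1}{\card F}-\frac{1}{\card C}\bigr)$.

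I do not anticipate any genuine obstacle here: the argument is a one-line expansion followed by substitution of the two quantities already established in Lemma~\ref{lemma1}, and the only points requiring care are the count of off-diagonal terms and the bookkeeping when combining fractions. As sanity checks, $F=C$ gives $\norm{c-f}^2=0$, and $\card F=1$ recovers $\norm{v-c}^2=\frac12\bigl(1-\frac1k\bigr)$ from Lemma~\ref{lemma1}. A slightly different route would be to avoid translating and instead write $c$ as the weighted average $\frac mk f+\frac{k-m}{k}g$ of the centroids of $F$ and $C\setminus F$, then apply Lemma~\ref{lemma1} to the pairs straddling this partition; but the direct computation above is shorter and I would present that one.
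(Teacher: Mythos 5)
Your proof is correct and is essentially identical to the paper's: both expand $\norm{f-c}^2=\bignorm{\frac1m\sum_{v\in F}(v-c)}^2$ and substitute the two inner-product values from Lemma~\ref{lemma1} (your translation of $c$ to the origin is just a notational repackaging of this). The arithmetic and the count of $m(m-1)$ off-diagonal terms both check out.
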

\begin{proof}
Let $k=\card{C}$, $\ell=\card{F}$.
Then
\begin{align*}
\norm{f-c}^2 &= \norm{\frac{1}{\ell}\sum_{v\in F}(v-c)}^2\\
&= \frac{1}{\ell^2}\left(\ell\cdot\frac12(1-\frac{1}{k})-\frac{\ell(\ell-1)}{2k}\right)\quad\text{by Lemma~\ref{lemma1}}\\
&= \frac12\left(\frac{1}{\ell}-\frac{1}{k}\right). \qedhere
\end{align*}
\end{proof}

\section{Proof of Theorem~\ref{thm1}}\label{section:proof}
Let $G$ be the unit-distance graph of the almost-equidistant set $V$.
Thus, the complement of $G$ is $K_3$-free, and the non-neighbours of any vertex form a unit simplex.
Let $C$ be a clique of maximum cardinality in $G$.
Write $k=\card{C}$.
Each $v\in V\setminus C$ is a non-neighbour of some point in $C$, and it follows that $\card{V}\leq\card{C}+\card{C}k=k^2+k$.
Thus, without loss of generality, $k> d^{2/3}$.

We split $V$ up into two parts, each to be bounded separately.
Let \[N=\setbuilder{v\in V}{\card{N(v)\cap C}\geq k-k^{4/3}d^{-2/3}}.\]
Note that $k^{4/3}d^{-2/3}=O(d^{-1/3})k$.
We first bound the complement of $N$.
Consider the set \[X=\setbuilder{(u,v)\in C\times V\setminus N}{uv\notin E(G)}.\]
For each $v\in V\setminus N$, there are more than $k^{4/3}d^{-2/3}$ points $u\in C$ such that $u\notin N(v)$, hence $\card{X}>k^{4/3}d^{-2/3}\card{V\setminus N}$.
On the other hand, for each $u\in C$, the set of non-neighbours of $u$ forms a clique, so has cardinality at most $k$, and $\card{X}\leq\card{C}k=k^2$.
It follows that
\begin{equation}\label{eq1}
\card{V\setminus N} < k^{2/3}d^{2/3}.
\end{equation}
Next we estimate $\card{N}$.
Without loss of generality, $\frac{1}{k}\sum_{v\in C}v=o$ and $N=\{v_1,\dots,v_n\}$.
We want to apply Lemma~\ref{lemma0} to the $n\times n$ matrix $A=[\ipr{v_i}{v_j}]$, which has rank at most $d$.
\begin{claim}\label{claim1}
For each $i=1\dots,n$, $\norm{v_i}^2=\frac12+O(k^{-1/3}d^{-1/3})$, and for each $v_iv_j\in E(G)$, $\ipr{v_i}{v_j}=O(k^{-1/3}d^{-1/3})$.
\end{claim}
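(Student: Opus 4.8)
The plan is to fix a vertex $v=v_i\in N$ and to exploit that its non-neighbours inside $C$ form only a small fraction of $C$. Write $F=C\setminus N(v)$ and $\ell=\card F$. By the definition of $N$ we have $\ell\leq k^{4/3}d^{-2/3}$, and since a clique of $G$ is a unit simplex it has at most $d+1$ vertices, so $k\leq d+1$; together with the assumption $k>d^{2/3}$ this gives $\tfrac\ell k\leq k^{1/3}d^{-2/3}=O(d^{-1/3})$ (so $k-\ell=\Theta(k)$), while $\tfrac1k$ and $\tfrac{\sqrt\ell}{k-\ell}\leq\tfrac{2\sqrt\ell}{k}$ are both $O(k^{-1/3}d^{-1/3})$. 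One has $\ell\geq1$ in all cases (if $v\notin C$ and $\ell=0$ then $C\cup\{v\}$ is a larger clique; if $v\in C$ then $F=\{v\}$). Finally, $v$ has a neighbour $u\in C$ since $N(v)\cap C\neq\emptyset$ for large $d$, and $\norm u^2=\tfrac12(1-\tfrac1k)<\tfrac12$ by Lemma~\ref{lemma1} (the centroid $o$ of $C$ is the origin), whence $\norm v\leq\norm u+\norm{v-u}<1+\tfrac1{\sqrt2}=O(1)$.

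Let $f=\tfrac1\ell\sum_{u\in F}u$ be the centroid of $F$. Since $F\subseteq C$, Lemma~\ref{lemma2} gives $\norm f^2=\norm{f-o}^2=\tfrac12\bigl(\tfrac1\ell-\tfrac1k\bigr)\leq\tfrac1{2\ell}$. The key identity comes from averaging over the neighbours of $v$ in $C$: each $u\in C\setminus F=N(v)\cap C$ satisfies $\norm{v-u}=1$, so summing $\norm{v-u}^2=\norm v^2-2\ipr vu+\norm u^2$ over these $k-\ell$ vertices and using Lemma~\ref{lemma1} together with $\sum_{u\in C\setminus F}u=\sum_{u\in C}u-\ell f=-\ell f$ gives
\[
(k-\ell)\norm v^2+2\ell\ipr vf=(k-\ell)\Bigl(\tfrac12+\tfrac1{2k}\Bigr),
\]
i.e. $\norm v^2=\tfrac12+\tfrac1{2k}-\tfrac{2\ell}{k-\ell}\ipr vf$.

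To finish the first assertion I bound the correction term by Cauchy--Schwarz: $\abs{\ipr vf}\leq\norm v\,\norm f=O(1)\cdot(2\ell)^{-1/2}$, so $\bigl|\tfrac{2\ell}{k-\ell}\ipr vf\bigr|=O\!\bigl(\tfrac{\sqrt\ell}{k-\ell}\bigr)=O(k^{-1/3}d^{-1/3})$, and therefore $\norm{v_i}^2=\tfrac12+O(k^{-1/3}d^{-1/3})$. The second assertion is then immediate: if $v_iv_j\in E(G)$ then $\norm{v_i-v_j}=1$, so $2\ipr{v_i}{v_j}=\norm{v_i}^2+\norm{v_j}^2-1=O(k^{-1/3}d^{-1/3})$ by the first part.

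The one point requiring care is the averaging step: the resulting identity is a single linear equation in the two unknowns $\norm v^2$ and $\ipr vf$, and it is useful only because $\ell$ is as small as $k^{4/3}d^{-2/3}$ --- precisely the bound that makes $\sqrt\ell/(k-\ell)$ small enough after Cauchy--Schwarz --- while the a priori inequalities $k\leq d+1$ and $k>d^{2/3}$ are what let the $\tfrac1k$ and $\tfrac\ell k$ contributions be absorbed into the error term. The rest is routine.
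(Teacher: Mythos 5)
Your proof is correct, and while it has the same skeleton as the paper's argument (anchor $v_i$ to the centroid of $C$, placed at the origin, via Lemmas~\ref{lemma1} and~\ref{lemma2} and the fact that $v_i$ is at unit distance from all but at most $k^{4/3}d^{-2/3}$ points of $C$), the execution is genuinely different. The paper works with the centroid $c_i$ of the \emph{neighbours} $N(v_i)\cap C$: since $v_i$ is equidistant from all of $C_i:=N(v_i)\cap C$, one has the Pythagorean relation $\norm{v_i-c_i}^2=\norm{v_i-v}^2-\norm{v-c_i}^2$, which pins down $\norm{v_i-c_i}$ exactly, and then $\norm{v_i}$ follows from the triangle inequality once Lemma~\ref{lemma2} gives $\norm{c_i}=O(k^{-1/3}d^{-1/3})$. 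You instead sum the identities $\norm{v_i-u}^2=1$ over the neighbours $u\in C$ to obtain an exact linear relation between $\norm{v_i}^2$ and $\ipr{v_i}{f}$, where $f$ is the centroid of the \emph{non-neighbours} $F=C\setminus N(v_i)$, and you control the error term by Cauchy--Schwarz using $\norm{f}^2\leq\frac{1}{2\card{F}}$ from Lemma~\ref{lemma2} together with an a priori bound $\norm{v_i}=O(1)$. Your route has the advantage of not needing the orthogonality fact underlying the paper's Pythagorean step (which the paper leaves implicit: it holds because the inner products $\ipr{v_i-c_i}{v-c_i}$ are equal for all $v\in C_i$ and sum to zero), at the cost of the extra observations that $N(v_i)\cap C\neq\emptyset$ and $\norm{v_i}=O(1)$. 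The numerology is identical in both proofs --- $\sqrt{\ell}/k=O(k^{-1/3}d^{-1/3})$ when $\ell\leq k^{4/3}d^{-2/3}$, and $1/k=O(k^{-1/3}d^{-1/3})$ since $k>d^{2/3}$ --- and both deduce the second assertion from the same polarization identity $2\ipr{v_i}{v_j}=\norm{v_i}^2+\norm{v_j}^2-1$.
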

\begin{proof}[Proof of Claim~\ref{claim1}]
Let $C_i:=N(v_i)\cap C$ and $k_i:=\card{C_i}\geq k-k^{4/3}d^{-2/3}$,
and $c_i:=\frac{1}{k_i}\sum_{v\in C_i}v$.
For any $v\in C_i$, $\norm{v_i-c_i}^2 = \norm{v_i-v}^2 - \norm{v-c_i}^2$.
By Lemma~\ref{lemma1}, 
$\norm{v-c_i}^2=\frac12\left(1-\frac{1}{k_i}\right)$, hence
\[\norm{v_i-c_i}=\sqrt{\frac12\left(1+\frac{1}{k_i}\right)}=\frac{1}{\sqrt{2}}+O(k^{-1}).\]
By Lemma~\ref{lemma2}, \[ \norm{c_i}=\sqrt{\frac12\left(\frac{1}{k_i}-\frac{1}{k}\right)}\leq\sqrt{\frac12\left(\frac{1}{k-k^{4/3}d^{-2/3}}-\frac{1}{k}\right)} = O(k^{-1/3}d^{-1/3}).\]
By the triangle inequality,
\[ \norm{v_i} = \norm{v_i-c_i}+O(\norm{c_i}) = \frac{1}{\sqrt{2}} + O(k^{-1/3}d^{-1/3}),\]
and $\norm{v_i}^2=\frac12+O(k^{-1/3}d^{-1/3})$.
Also, $2\ipr{v_i}{v_j}=\norm{v_i}^2+\norm{v_j}^2-1=O(k^{-1/3}d^{-1/3})$.
\end{proof}
\begin{claim}\label{claim2}
For each $i=1,\dots,n$, \[\sum_{\substack{j=1\\ v_iv_j\notin E(G)}}^n\ipr{v_i}{v_j}^2=O(k^{2/3}d^{-1/3}).\]
\end{claim}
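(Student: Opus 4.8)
The plan is to fix an index $i$ and to control the non-neighbours of $v_i$ by exploiting that they form a unit simplex of size at most $k$. Write $S_i=\setbuilder{v_j\in N}{j\neq i,\ v_iv_j\notin E(G)}$ for the set of non-neighbours of $v_i$ inside $N$, and set $m=\card{S_i}$. Since the non-neighbours of any vertex form a unit simplex, $S_i$ is a clique of $G$, and because $C$ is a clique of maximum cardinality this gives the crucial bound $m\leq k$. The diagonal term $j=i$ contributes $\ipr{v_i}{v_i}^2=\norm{v_i}^4=O(1)$ by Claim~\ref{claim1}, so it suffices to prove $\sum_{v_j\in S_i}\ipr{v_i}{v_j}^2=O(k^{2/3}d^{-1/3})$.

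For this I would pass to the matrix $M_i=\sum_{v_j\in S_i}v_jv_j^{\mathsf T}$ and write $\sum_{v_j\in S_i}\ipr{v_i}{v_j}^2=v_i^{\mathsf T}M_iv_i\leq\lambda_{\max}(M_i)\norm{v_i}^2$. The nonzero eigenvalues of $M_i$ coincide with those of the Gram matrix $[\ipr{v_j}{v_{j'}}]_{v_j,v_{j'}\in S_i}$, so $\lambda_{\max}(M_i)$ equals the largest eigenvalue of this $m\times m$ matrix. By Claim~\ref{claim1} every diagonal entry equals $\tfrac12+O(k^{-1/3}d^{-1/3})$; and since any two points of the unit simplex $S_i$ lie at distance $1$, each off-diagonal entry equals $\tfrac12(\norm{v_j}^2+\norm{v_{j'}}^2-1)=O(k^{-1/3}d^{-1/3})$ as well. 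Hence the Gram matrix has the form $\tfrac12 I+E$ with $\card{E_{j,j'}}=O(k^{-1/3}d^{-1/3})$ for all $j,j'$.

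Now I would apply Gershgorin's theorem (or simply the bound $\norm{E}\leq m\max_{j,j'}\card{E_{j,j'}}$) together with $m\leq k$: every eigenvalue of $E$ is $O\!\left(m\,k^{-1/3}d^{-1/3}\right)=O(k^{2/3}d^{-1/3})$, so $\lambda_{\max}(M_i)\leq\tfrac12+O(k^{2/3}d^{-1/3})$. Combining this with $\norm{v_i}^2=\tfrac12+O(k^{-1/3}d^{-1/3})$ from Claim~\ref{claim1} yields $\sum_{v_j\in S_i}\ipr{v_i}{v_j}^2\leq\lambda_{\max}(M_i)\norm{v_i}^2=O(k^{2/3}d^{-1/3})$; adding back the $O(1)$ term $j=i$, which is itself $O(k^{2/3}d^{-1/3})$ since $k>d^{2/3}$, finishes the claim.

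I expect the main obstacle to lie in the first paragraph rather than in the estimates: the only bound a general unit simplex offers is $m\leq d+1$, which is far too weak here and leads to a loss of roughly $O(d^{4/9})$; it is the observation that $S_i$ is a clique, and hence $m\leq k$ by the maximality of $C$, that makes the accumulated off-diagonal mass $O(k^{2/3}d^{-1/3})$ rather than $O(d^{2/3}k^{-1/3})$. An equivalent route avoiding eigenvalues is to put $g=\frac1m\sum_{v_j\in S_i}v_j$ and use $\sum_{v_j\in S_i}\ipr{v_i}{v_j}^2=m\ipr{v_i}{g}^2+\sum_{v_j\in S_i}\ipr{v_i}{v_j-g}^2$, bounding the second sum by $\tfrac12\norm{v_i}^2$ through Lemma~\ref{lemma1} and computing $\norm{g}^2=\tfrac1{2m}+O(k^{-1/3}d^{-1/3})$, so that $m\ipr{v_i}{g}^2\leq\norm{v_i}^2\, m\norm{g}^2=O(k^{2/3}d^{-1/3})$; here again the decisive input is $m\leq k$.
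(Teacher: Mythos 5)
Your proof is correct, and your primary route is genuinely different from the paper's. Both arguments rest on the same two inputs --- the non-neighbours of $v_i$ in $N$ are pairwise at unit distance, hence form a clique of size at most $k$ by the maximality of $C$, and Claim~\ref{claim1} makes their Gram matrix entrywise close to $\frac12 I$ --- but you exploit these spectrally, via $\sum_{v_j\in S_i}\ipr{v_i}{v_j}^2=\transpose{v_i}M_iv_i\leq\lambda_{\max}(M_i)\norm{v_i}^2$, the coincidence of the nonzero spectra of $M_i$ and of the Gram matrix, and Gershgorin applied to $\frac12 I+E$. The paper instead argues geometrically: after reducing to $t\leq d$ (deleting one simplex point at an $O(1)$ cost), it constructs an auxiliary point $p$ for which $\setbuilder{v_j-p}{v_j\in N\setminus N(v_i)}$ is an orthogonal system of vectors of norm $1/\sqrt2$, applies the finite Bessel inequality, pays a factor $3$ via Cauchy--Schwarz on the decomposition $v_j=(v_j-p)+(p-c)+c$, and estimates $\norm{c}^2=\frac{1}{2t}+O(k^{-1/3}d^{-1/3})$ --- which is precisely the averaged form of your entrywise bound on $E$, so the decisive accounting $t\cdot O(k^{-1/3}d^{-1/3})=O(k^{2/3}d^{-1/3})$, with $t\leq k$ from clique maximality, is identical in both proofs. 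What your version buys: no auxiliary point $p$, no case distinction at $t=d+1$, and no Bessel inequality, using only the smallness of the Gram entries rather than the exact simplex geometry; the cost is marginally heavier linear algebra (the $V\transpose{V}$ versus $\transpose{V}V$ spectral identity plus Gershgorin in place of elementary inner-product manipulations). Your closing ``alternative route'' via the centroid $g$ is in fact closer to the paper's proof and arguably cleaner, since splitting at the centroid makes the cross term vanish exactly (as $\sum_{v_j\in S_i}(v_j-g)=0$) and so removes the factor $3$; note only that the asserted bound $\sum_{v_j\in S_i}\ipr{v_i}{v_j-g}^2\leq\frac12\norm{v_i}^2$ requires the small additional observation that the Gram matrix of the vectors $v_j-g$ computed from Lemma~\ref{lemma1}, namely $\frac12 I-\frac{1}{2m}J$ with $J$ the all-ones matrix, has operator norm at most $\frac12$. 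A final point in your favour: you treat the diagonal term $j=i$ explicitly (the paper's set $N\setminus N(v_i)$ formally contains $v_i$ itself, so is not literally a unit simplex), and your absorption of the resulting $O(1)$ --- like the constant $\frac14$ arising from $\lambda_{\max}\cdot\norm{v_i}^2$ --- into $O(k^{2/3}d^{-1/3})$ correctly invokes $k>d^{2/3}$, an absorption the paper's own estimate $t\norm{c}^2=\frac12+O(tk^{-1/3}d^{-1/3})=O(k^{2/3}d^{-1/3})$ performs as well.
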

\begin{proof}[Proof of Claim~\ref{claim2}]
The non-neighbours $N\setminus N(v_i)$ of $v_i$ form a unit simplex with cardinality $t:=\card{N\setminus N(v_i)}\leq k$ and with centroid $c$, say.
If $t=d+1$, remove one point $v_j$ from the unit simplex, which decreases the sum by $\ipr{v_i}{v_j}^2=O(1)$.
Thus, without loss of generality, $t\leq d$, and there exists a point $p\in\bR^d$ such that $p-c$ is orthogonal to the affine hull of $N(v_i)$, $\norm{p-c}=1/\sqrt{2t}$, the set $\setbuilder{v_j-p}{v_j\in N\setminus N(v_i)}$ is orthogonal, and $\norm{v_j-p}=1/\sqrt{2}$ for each non-neighbour $v_j$ of $v_i$.
Then, by the finite Bessel inequality, \[\sum_{v_j\in N\setminus N(v_i)}\ipr{v_i}{v_j-p}^2\leq\frac12\norm{v_i}^2,\] hence by applying Cauchy-Schwarz a few times,
\begin{align}
\sum_{v_j\in N\setminus N(v_i)}\ipr{v_i}{v_j}^2 &= \sum_{v_j\in N\setminus N(v_i)}\bigl(\ipr{v_i}{v_j-p} + \ipr{v_i}{p-c} + \ipr{v_i}{c}\bigr)^2\notag\\
&\leq 3\sum_{v_j\in N\setminus N(v_i)}\bigl(\ipr{v_i}{v_j-p}^2 + \ipr{v_i}{p-c}^2 + \ipr{v_i}{c}^2\bigr)\notag\\
&\leq 3\left(\frac12\norm{v_i}^2 + t\norm{v_i}^2\norm{p-c}^2+t\norm{v_i}^2\norm{c}^2\right)\notag\\
&\leq 3(\norm{v_i}^2+t\norm{v_i}^2\norm{c}^2). \label{eq2}
\end{align}
By Claim~\ref{claim1}, $\norm{v_i}^2=\frac12+O(k^{-1/3}d^{-1/3})$ and
\begin{align*}
\norm{c}^2 &= \Bignorm{\frac1t\sum_{v_j\in N\setminus N(v_i)} v_j}^2 = \frac1{t^2}\biggl(\,\sum_{v_j\in N\setminus N(v_i)} \norm{v_j}^2 + \sum_{\substack{v_j,v_{j'}\in N\setminus N(v_i)\\ v_j\neq v_{j'}}}\ipr{v_j}{v_{j'}}\biggr)\\
&\leq \frac{1}{t^2}\left(t\left(\frac12+O(k^{-1/3}d^{-1/3})\right) + t(t-1)O(k^{-1/3}d^{-1/3})\right)\\
&= \frac{1}{2t} + O(k^{-1/3}d^{-1/3}).
\end{align*}
Therefore, \[t\norm{c}^2=\frac12+O(tk^{-1/3}d^{-1/3}) = O(k^{2/3}d^{-1/3}).\]
Substitute this back into \eqref{eq2} to finish the proof of Claim~\ref{claim2}.
\end{proof}
We now finish the proof of the theorem.
By Claim~\ref{claim2},
\begin{align*}
\sum_{j=1}^n\ipr{v_i}{v_j}^2 &= \norm{v_i}^4 + \sum_{v_j\in N(v_i)} \ipr{v_i}{v_j}^2 + O(k^{2/3}d^{-1/3}) \\
&= nO(k^{-2/3}d^{-2/3}) + O(k^{2/3}d^{-1/3}) \quad\text{by Claim~\ref{claim1}.}
\end{align*}
Also by Claim~\ref{claim1}, $\sum_{i=1}^n\norm{v_i}^2 = \Omega(n)$.
Therefore, by Lemma~\ref{lemma0},
\[ d\geq \rank A\geq \frac{\left(\sum_{i=1}^n\norm{v_i}^2\right)^2}{\sum_{i,j=1}^n\ipr{v_i}{v_j}^2} = \frac{\Omega(n^2)}{n\bigl(nO(k^{-2/3}d^{-2/3}) + O(k^{2/3}d^{-1/3})\bigr)},\]
hence $n=O(nk^{-2/3}d^{1/3}) + O(k^{2/3}d^{2/3})$.
Since $O(k^{-2/3}d^{1/3})=o(1)$, it follows that $\card{N}=n=O(k^{2/3}d^{2/3})$.
Recalling \eqref{eq1}, we obtain that \[\card{V}=\card{N}+\card{V\setminus N} = O(k^{2/3}d^{2/3}) = O(d^{4/3}).\qedhere\]

\providecommand{\bysame}{\leavevmode\hbox to3em{\hrulefill}\thinspace}
\providecommand{\MR}{\relax\ifhmode\unskip\space\fi MR }
\providecommand{\MRhref}[2]{%
  \href{http://www.ams.org/mathscinet-getitem?mr=#1}{#2}
}
\providecommand{\href}[2]{#2}

\end{document}